\newcommand{\e}{\textrm{e}}
\newtheorem{theorem}{Theorem}
\newtheorem{lemma}{Lemma}
\newtheorem{proposition}{Proposition}
\newtheorem*{example}{Example}
\newcommand{\F}{\mathcal{F}}
\newcommand{\FF}{(\F_t)_{t\geq 0}}
\newcommand{\R}{\mathbb{R}}
\newcommand{\Po}{\mathbb{P}}
\newcommand{\E}{\mathbb{E}}
\begin{document}
\title{Optimal stopping of oscillating Brownian motion}
\author{Ernesto  Mordecki\thanks{Universidad de la Rep\'ubica, Facultad de Ciencias, Centro de Matem\'atica, Igu\'a 4225, 11400, Montevideo, Uruguay, e-mail: mordecki@cmat.edu.uy}  
\ and\   Paavo Salminen\thanks{\AA bo Akademi University, Faculty of Science
  and Engineering, FIN-20500 \AA bo, Finland, e-mail: phsalmin@abo.fi.}}
\maketitle
\abstract{
We solve optimal stopping problems for an oscillating Brownian motion,
i.e. a diffusion with positive piecewise constant volatility 
changing at the point $x=0$. Let $\sigma_1$ and $\sigma_2$ denote the volatilities on the negative and positive half-lines, respectively.
Our main result is that continuation region of the optimal stopping problem 
with reward $((1+x)^+)^2$ is disconnected,
if and only if $\sigma_1^2<\sigma_2^2<2\sigma_1^2$.
Based on the fact that the skew Brownian motion in natural scale is an 
oscillating Brownian motion, the obtained results are translated into corresponding
results for the skew Brownian motion. 
}\\

\noindent{\bf AMS Subject Classification:} 60J60, 60J65, 62L15\\

\noindent{\bf Keywords:} excessive function, integral representation of excessive functions.

\section{Introduction}

The optimal stopping problems of diffusions with exceptional points has attracted interest in recent years. 
These include cases where the underlying diffusion has sticky points, 
skew points, or discontinuities in the diffusion coefficients.
One of the first findings is that in the presence of sticky points the classical smooth fit principle does not necessarily hold, even for differentiable payoff functions (as found in Crocce and Mordecki  \cite{CrocceMordecki}, and Salminen and Ta \cite{SalminenTa}). 
A second finding is that if the diffusion has a skew point, it can be the case that this point is in the continuation region for all discount values, as found by Alvarez and Salminen \cite{AlvarezSalminen} and Presman \cite{Presman}.
A third one is that the continuation region in these cases can be disconnected, 
as observed in \cite{AlvarezSalminen} for the skew Brownian motion, 
and also found recently by Mordecki and Salminen \cite{MordeckiSalminen} 
for a diffusion with discontinuous drift and payoff function is $(1+x)^+$.
General verification results for diffusions with discontinuous coefficients were obtained by R\"uschendorf and Urusov \cite{RuschendorfUrusov}.
An exposition of the general theory of optimal stopping (including historical comments)
can be found in Shiryaev \cite{Shiryaev} and Peskir and Shiryaev \cite{PeskirShiryaev}.

In this paper the focus is on the case when the underlying diffusion has discontinuous infinitesimal variance.
We then consider the optimal stopping problem for the oscillating Brownian motion (OBM),
a diffusion with positive piecewise constant volatility changing at the origin.
For details and further results on OBM, 
see Keilson and Wellner \cite{KeilsonWellner},
Lejay and Pigato \cite{LejayPigato}, and the references therein.
Our main results are the following: 
Firstly, for the payoff $(1+x)^+$ the solution of the optimal stopping
problem for the OBM is one sided for all values of the parameters, 
but for the payoff $((1+x)^+)^2$ the continuation region is disconnected for some values of the parameters.
Hence, this latter situation is similar as the one in \cite{MordeckiSalminen}.
Secondly, based on the fact that the skew Brownian motion (SBM) in natural scale is an OBM,
we obtain a result that connects the solutions of the respective optimal stopping problems for SBM and OBM,
finding that the non-differentiability of the scale function of SBM at the origin plays a key r\^ole in understanding
some of the phenomena that appear in the solutions of these problems.

\section{Diffusions and optimal stopping.}
Consider a conservative and regular one-dimensional (or linear) diffusion 
$X=(X_t)_{t\geq 0}$
taking values in $\R$,
in the sense of It\^o and McKean \cite{ItoMcKean} (see also Borodin and Salminen \cite{BorodinSalminen}). 
Let $\Po_x$ and $\E_x$ denote  the probability and the expectation associated with $X$ when starting from $x$, respectively;
$m$ denotes the speed measure and $S$ the scale function. 
For $r\geq 0$ let $\varphi_r$ ($\psi_r$) be the decreasing (increasing) positive fundamental solution of the generalized ODE 
\begin{equation}
\label{gen}
\frac d{dm}\frac d{dS} u=ru,
\end{equation}  
satisfying the appropriate boundary conditions (see \cite{BorodinSalminen} II.10 p.18). 
Denote by $\mathcal{M}$ the set of all stopping times in the filtration $\FF$, the usual augmentation of the natural 
filtration generated by $X.$ 
Given a continuous reward function $g\colon\R\to [0,\infty)$ and a discount factor ${r} \geq 0$, 
consider the optimal stopping problem consisting of finding a function $V_r$ and a stopping time $\tau^*\in\mathcal{M}$, such that
\begin{equation}\label{osp}
V_r(x)=\E_{x}[{\e^{-r\tau^*} g(X_{\tau^*})}] = \sup_{\tau \in \mathcal{M}}\E_{x}[{\e^{-r\tau} g(X_{\tau})}],
\end{equation}
where on the set $\{\tau=\infty\}$ 
$$
\e^{-r\tau} g(X_{\tau}):=\limsup_{t\to\infty}{\e^{-r t} g(X_{t})}.
$$
The \emph{value function} $V_r$ and the \emph{optimal stopping time} $\tau^*$  constitute  the solution of the problem.
The optimal stopping time $\tau^*$ in (\ref{osp}), 
can be characterized (see Theorem 3, Section 3.3 in \cite{Shiryaev}) as the first entrance time into the stopping region
\begin{equation}\label{eq:set}
\Gamma_r:=\{x\colon V_r(x)=g(x)\}.
\end{equation}
The set  ${\rm C}_r:=\R\setminus \Gamma_r$ is called the continuation region.

Our main tools to solve the optimal stopping problem for OBM
are the representation theory for excessive functions, and the following two results from the
theory of optimal stopping.  
The first one (Theorem \ref{smoothfit1}) --formulated here for a left boundary point of the stopping region-- is the smooth fit theorem, proof of which can be found in \cite{Salminen} or \cite{Peskir};
the second one (Proposition \ref{apu}) is a verification result, for the proof see Corollary on p. 124 in \cite{Shiryaev}.
\begin{theorem}
\label{smoothfit1}
 Let $z$ be a left boundary point of $\Gamma_r,$ i.e., 
$[z,z+\varepsilon_1)\subset \Gamma_r$ and $(z-\varepsilon_2,z)\subset
{\rm C}_r$ for some positive $\varepsilon_1$ and  $\varepsilon_2.$
Assume that the
reward function 
$g$ and the fundamental solutions $\varphi_r$
and $\psi_r$ are  
differentiable at $z.$ Then the value
function $V_r$ in (\ref{osp})  is 
differentiable at $z$ and it holds $V_r^\prime(z)=g^\prime(z).$
\end{theorem}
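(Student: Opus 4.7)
I would analyse the one-sided derivatives of $V_r$ at $z$ separately and then invoke the $r$-excessivity of $V_r$ to force them to coincide. Since $[z,z+\varepsilon_1)\subset\Gamma_r$, on this interval $V_r=g$, so
$$V_r^\prime(z+)=\lim_{x\downarrow z}\frac{g(x)-g(z)}{x-z}=g^\prime(z).$$
On $(z-\varepsilon_2,z)\subset\mathrm{C}_r$ the value function satisfies $\rl V_r=0$ in the sense of (\ref{gen}), hence locally $V_r(x)=A\varphir(x)+B\psir(x)$ for constants $A,B$. The assumed differentiability of $\varphir$ and $\psir$ at $z$ therefore yields the existence of the left derivative $V_r^\prime(z-)=A\varphir^\prime(z)+B\psir^\prime(z)$.

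The obstacle $V_r\ge g$ together with $V_r(z)=g(z)$ immediately gives $V_r^\prime(z-)\le g^\prime(z)$: for $x\in(z-\varepsilon_2,z)$ one has $(V_r(x)-V_r(z))/(x-z)=(V_r(x)-g(z))/(x-z)\le(g(x)-g(z))/(x-z)$, and letting $x\uparrow z$ yields the inequality. Since $V_r^\prime(z+)=g^\prime(z)$, it remains to prove the reverse inequality $V_r^\prime(z-)\ge g^\prime(z)$.

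For this I would apply $r$-excessivity to the two-sided exit time $\tau_\varepsilon$ of $X$ from $(z-\varepsilon,z+\varepsilon)$ starting at $z$, obtaining
$$V_r(z)\ge\E_z\bigl[\e^{-r\tau_\varepsilon}V_r(X_{\tau_\varepsilon})\bigr]=\alpha_\varepsilon\,V_r(z-\varepsilon)+\beta_\varepsilon\,V_r(z+\varepsilon),$$
where $\alpha_\varepsilon=\E_z[\e^{-r\tau_\varepsilon};\,X_{\tau_\varepsilon}=z-\varepsilon]$ and $\beta_\varepsilon=\E_z[\e^{-r\tau_\varepsilon};\,X_{\tau_\varepsilon}=z+\varepsilon]$ are the standard one-sided hitting functionals, expressible through $\varphir$ and $\psir$ evaluated at $z\pm\varepsilon$. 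Differentiability of $\varphir,\psir$ at $z$ yields the asymptotics $\alpha_\varepsilon=\tfrac12+O(\varepsilon)$, $\beta_\varepsilon=\tfrac12+O(\varepsilon)$, and $\alpha_\varepsilon+\beta_\varepsilon=1+O(\varepsilon^2)$. Substituting the one-sided Taylor expansions $V_r(z\pm\varepsilon)=V_r(z)\pm V_r^\prime(z\pm)\,\varepsilon+o(\varepsilon)$ and collecting the $O(\varepsilon)$ contributions produces
$$0\ge\frac{\varepsilon}{2}\bigl(V_r^\prime(z+)-V_r^\prime(z-)\bigr)+o(\varepsilon),$$
so that dividing by $\varepsilon$ and letting $\varepsilon\downarrow0$ gives $V_r^\prime(z-)\ge V_r^\prime(z+)=g^\prime(z)$. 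The main technical obstacle is the careful bookkeeping of the weights $\alpha_\varepsilon,\beta_\varepsilon$ and of the discount factor $\e^{-r\tau_\varepsilon}$, which is precisely the place where the differentiability hypothesis on $\varphir$ and $\psir$ enters.
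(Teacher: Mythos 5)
The paper does not prove this theorem itself; it defers to the references [Salminen] and [Peskir], and your argument is essentially the classical proof found there: the obstacle inequality $V_r\ge g$ with $V_r(z)=g(z)$ gives $V_r'(z-)\le g'(z)$, while the supermartingale property of $\e^{-rt}V_r(X_t)$ over the two-sided exit time gives the generalized concavity $V_r'(z-)\ge V_r'(z+)=g'(z)$. Your reasoning is correct. Two small points of bookkeeping: with only differentiability of $\varphir$ and $\psir$ at $z$ the expansions of the hitting functionals give $\alpha_\varepsilon=\tfrac12+o(1)$ and $\beta_\varepsilon=\tfrac12+o(1)$ rather than $\tfrac12+O(\varepsilon)$, which is still enough for your limit; and the step $\alpha_\varepsilon+\beta_\varepsilon=1+O(\varepsilon^2)$ really only needs $1-\E_z[\e^{-r\tau_\varepsilon}]\le r\,\E_z[\tau_\varepsilon]=o(\varepsilon)$, which is exactly where the differentiability hypothesis (ruling out an atom of the speed measure or a kink of the scale at $z$, i.e.\ sticky or skew points) is used -- this is the mechanism behind the failures of smooth fit mentioned in the paper's introduction. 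For the OBM application all of these asymptotics are explicit, so the argument goes through as you describe.
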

%
%
%
%
%
\begin{proposition}\label{apu}
Let $A\subset \R$ be a nonempty Borel subset of $\R$ and 
$$
H_A:=\inf\{t:X_t\in A\}.
$$ 
Assume that the function
$$
\widehat{V}(x):=\mathbb{E}_x\left[\textrm{e}^{-r\,H_A}g(X_{H_A})\right]
$$
is $r$-excessive and dominates $g$. Then $\widehat{V}$ coincides with the value function of OSP (\ref{osp}) and $H_A$ is an optimal stopping time.
\end{proposition}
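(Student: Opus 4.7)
The plan is to exploit the standard characterization of the value function $V_r$ as the smallest $r$-excessive majorant of the reward $g$, and then squeeze $\widehat V$ between $V_r$ and itself via two one-line inequalities. Concretely, I would split the argument into a lower bound and an upper bound for $\widehat V$ in terms of $V_r$.

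For the lower bound $\widehat V\le V_r$, the hitting time $H_A$ is an $\FF$-stopping time (since $A$ is Borel and the filtration is the usual augmentation), hence $H_A\in\mathcal M$. Substituting $\tau=H_A$ inside the supremum that defines $V_r$ in~(\ref{osp}) yields
\begin{equation*}
\widehat V(x)=\E_x\!\left[\e^{-rH_A}g(X_{H_A})\right]\le \sup_{\tau\in\mathcal M}\E_x\!\left[\e^{-r\tau}g(X_\tau)\right]=V_r(x).
\end{equation*}
For the upper bound $V_r\le \widehat V$, I would invoke the general theory (cf.\ Shiryaev \cite{Shiryaev}, Section~3.3): under the standing assumptions on $g$ and $X$, the value function $V_r$ is the minimal $r$-excessive function dominating $g$. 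Since by hypothesis $\widehat V$ is both $r$-excessive and $\ge g$, minimality forces $V_r\le \widehat V$. Combining the two inequalities gives $V_r=\widehat V$, and then $\E_x[\e^{-rH_A}g(X_{H_A})]=\widehat V(x)=V_r(x)$ says precisely that $H_A$ attains the supremum in~(\ref{osp}), i.e.\ is optimal.

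The only point that requires a little care, and which I would flag as the main technical obstacle, is the behaviour on $\{H_A=\infty\}$: one must check that the convention $\e^{-rH_A}g(X_{H_A}):=\limsup_{t\to\infty}\e^{-rt}g(X_t)$ is compatible with the $r$-excessive majorization argument, so that the reduite interpretation of $\widehat V$ as $\E_x[\e^{-rH_A}g(X_{H_A})\indicator_{\{H_A<\infty\}}]+\E_x[\limsup_{t\to\infty}\e^{-rt}g(X_t)\indicator_{\{H_A=\infty\}}]$ produces an honest $r$-excessive function. This is exactly what is handled in the corollary on p.~124 of \cite{Shiryaev}, which I would cite rather than re-prove.
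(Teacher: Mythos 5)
Your argument is correct and is essentially the same as the paper's, which does not spell out a proof at all but simply cites the Corollary on p.~124 of \cite{Shiryaev} --- precisely the characterization of $V_r$ as the smallest $r$-excessive majorant of $g$ that you combine with the trivial inequality $\widehat V\le V_r$ obtained by plugging $\tau=H_A$ into the supremum. You correctly identify the one delicate point (the convention on $\{H_A=\infty\}$) and defer it to the same reference, so nothing is missing.
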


\section{Oscillating Brownian motion}\label{section:obm}
Consider the diffusion satisfying the stochastic differential equation
\begin{equation*}
    X_t=x+\int_0^t \sigma(X_s)W_s,
\end{equation*}
where
$$
\sigma(x)=
\begin{cases}
{\sigma_1},&x<0,\\
{\sigma_2},&x\geq 0,
\end{cases}
$$
and  $(W_t)_{t\geq 0}$ is a standard Brownian motion. 
The diffusion $X$ is called an oscillating Brownian motion (OBM).
Notice that this process is in natural scale, i.e. the scale function is $S(x)=x$, and
the speed measure is
$$
m(dx)=
\begin{cases}
(2/\sigma_1^2)dx,\text{ $x<0$,}\\
(2/\sigma_2^2)dx,\text{ $x>0$.}\\
\end{cases}
$$
(by definition there is no mass at $x=0$).
Let
\begin{equation*}
\lambda^{\pm}_1=\pm{\sqrt{2r}\over \sigma_1},
\qquad
\lambda^{\pm}_2=\pm{\sqrt{2r}\over \sigma_2}.\\
\end{equation*}
The decreasing fundamental solution is
\begin{equation}\label{eq:phi}
\varphi_r(x)=
\begin{cases}
A_1\exp(\lambda_1^-x)+A_2\exp(\lambda_1^+x),&x<0,\\
\exp(\lambda_2^-x),& x\geq 0,\\
\end{cases}
\end{equation}
where the constants $A_1$ and $A_2$ are determined so that $\varphi_r$ is continuous and differentiable at 0. Hence,
\begin{align*}
A_1&={\lambda_1^+-\lambda_2^-\over \lambda_1^+-\lambda_1^-}=
{1+\sigma_1/\sigma_2\over 2},
\qquad
A_2={\lambda_2^- -\lambda_1^-\over \lambda_1^+-\lambda_1^-}
=
{1-\sigma_1/\sigma_2\over 2}.
\end{align*}
Analogously, the increasing solution is 
\begin{equation}\label{eq:psi}
\psi_r(x)=
\begin{cases}
\exp(\lambda_1^+x),& x<0,\\
B_1\exp(\lambda_2^+x)+B_2\exp(\lambda_2^-x),&x\geq 0,
\end{cases}
\end{equation}
with
$$
B_1={\lambda_1^+-\lambda_2^-\over \lambda_2^+-\lambda_2^-}=
{1+\sigma_2/\sigma_1\over 2},
\qquad
B_2={\lambda_2^+ -\lambda_1^+\over \lambda_2^+-\lambda_2^-}
=
{1-\sigma_2/\sigma_1\over 2}.
$$

\section{Optimal stopping of OBM}

We first analyze the optimal stopping problem \eqref{osp} for the diffusion introduced above and the reward function 
$$
g_1(x)=
\begin{cases}
0,& x\leq-1,\\
1+x, & x>-1.\\
\end{cases}
$$
The following result shows that the solution of this problem is one sided.


\begin{proposition} \label{prop0} 
Consider the OSP problem \eqref{osp} with payoff $g_1$.
For all values of $r>0$, $\sigma_1$ and $\sigma_2$ the continuation region is given by
$$
{\rm C}_r=(-\infty,c),
$$
where $c=c(r)>-1$ is the unique solution of the equation
\begin{equation}\label{eqH}
\psi_r'(x)(1+x)-\psi_r(x)=0.
\end{equation}
Furthermore
\begin{equation}\label{root}
2r \lesseqqgtr \sigma_1^2\quad \Rightarrow\quad c(r) \lesseqqgtr 0.
\end{equation}
\end{proposition}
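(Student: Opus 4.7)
Since $g_1$ is nondecreasing and nonnegative, I conjecture a one-sided stopping region $[c,\infty)$ and construct a candidate accordingly. On $(-\infty,c)$ the value function must be a positive $r$-harmonic function bounded at $-\infty$, hence a constant multiple of $\psir$; matching $V_r(c)=g_1(c)=1+c$ gives the candidate
\[
V_r(x)=\begin{cases}(1+c)\,\psir(x)/\psir(c),& x<c,\\ 1+x,& x\ge c.\end{cases}
\]
The smooth fit principle (Theorem \ref{smoothfit1}) then requires $V_r'(c)=g_1'(c)=1$, which rearranges exactly to (\ref{eqH}).

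I would next prove the existence and uniqueness of a root $c>-1$ of (\ref{eqH}) by studying $H(x):=\psir'(x)(1+x)-\psir(x)$. Off the origin, the generalized ODE from (\ref{gen}) gives $\psir''(x)=(2r/\sigma(x)^2)\psir(x)>0$, so $\psir$ is convex and consequently $H'(x)=\psir''(x)(1+x)>0$ on $(-1,\infty)\setminus\{0\}$. Thus $H$ is continuous and strictly increasing on $(-1,\infty)$, with $H(-1)=-\psir(-1)<0$ and $H(x)\to\infty$ as $x\to\infty$ (since $\psir$ grows exponentially with rate $\lambda_2^+>0$); hence (\ref{eqH}) has a unique root $c>-1$.

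To verify optimality, I invoke Proposition \ref{apu} with $A=[c,\infty)$. Standard expressions for hitting distributions in natural scale identify the above candidate with $\E_x[\e^{-rH_A}g_1(X_{H_A})]$. Domination $V_r\ge g_1$ follows from convexity of $D:=V_r-g_1$ on $[-1,c)$, which vanishes together with its derivative at $c$ by smooth fit, while on $(-\infty,-1]$ one has $g_1=0\le V_r$. For $r$-excessiveness, the pointwise inequality $(r-\LL)V_r\ge 0$ holds off the junctions $\{0,c\}$: $V_r$ is $r$-harmonic on $(-\infty,c)$ by construction, and $(r-\LL)(1+x)=r(1+x)>0$ on $(c,\infty)$ since $c>-1$. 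For (\ref{root}), I compute $\psir(0)=1$ and $\psir'(0)=\lambda_1^+=\sqrt{2r}/\sigma_1$ from (\ref{eq:psi}), giving $H(0)=\sqrt{2r}/\sigma_1-1$; by the strict monotonicity of $H$ the sign of $c$ is opposite that of $H(0)$, which yields (\ref{root}). The principal technical obstacle is the global $r$-excessiveness verification at the junction $c$, where $V_r$ is only $C^1$; this is overcome by a local-time or It\^o--Tanaka argument in which, thanks precisely to the smooth fit equality $V_r'(c^-)=V_r'(c^+)$, the singular boundary contribution vanishes and the question reduces to the pointwise check just performed.
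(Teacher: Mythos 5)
Your proposal is correct and shares the paper's overall skeleton: the candidate value is generated by the hitting time of $[c,\infty)$, the threshold is characterized as the unique root in $(-1,\infty)$ of $H(x)=\psir'(x)(1+x)-\psir(x)$ via exactly the computation the paper uses ($H'(x)=\psir''(x)(1+x)=m(x)r\psir(x)(1+x)>0$, $H(-1)=-\psir(-1)<0$, $H(+\infty)=+\infty$), and optimality is concluded from Proposition \ref{apu}. Where you genuinely diverge is the verification step. The paper (deferring to the detailed proof of Proposition \ref{prop1}) checks $r$-excessivity by the representation theory of excessive functions, showing that $I_V=\psir'\widehat V-\psir\widehat V'$ is non-decreasing and $D_V=\varphir\widehat V'-\varphir'\widehat V$ is positive and non-increasing, so that they induce a representing (probability) measure; domination comes from monotonicity of $x\mapsto \psir(x)/g(x)$. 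You instead verify the variational inequality $(r-\LL)V_r\ge 0$ off the junctions and invoke It\^o--Tanaka, using smooth fit at $c$ to annihilate the local-time term, and you get domination from convexity of $V_r-g_1$ on $[-1,c)$. Both routes work here: your occupation-times argument is legitimate because $V_r$ is $C^1$ everywhere (at $x=0$ since $\psir$ is glued $C^1$ there, at $x=c$ by the defining equation \eqref{eqH}) and its second-derivative measure has no atoms, though this point at $x=0$ deserves to be said explicitly rather than gestured at; the paper's route has the advantage of reusing the same objects ($G_\pm$, $I_V$, $D_V$) throughout the later, harder propositions, while yours is more elementary and self-contained. One substantive remark on the last step: your (correct) facts that $H$ is strictly increasing and $H(0)=\sqrt{2r}/\sigma_1-1$ give $2r\lesseqqgtr\sigma_1^2\;\Rightarrow\;c(r)\gtreqqless 0$, i.e.\ the \emph{reverse} of \eqref{root} as literally printed (consistent with $c(r)$ being decreasing in $r$ and with Proposition \ref{prop2}); the paper's own one-line justification establishes the same reversed implication, so \eqref{root} contains a sign typo which your write-up silently reproduces by asserting that your conclusion ``yields \eqref{root}'' --- you should state the corrected implication explicitly.
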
 
\begin{proof}
To prove the first statement, consider the functions (cf. \cite{Salminen})
\begin{align}\label{H-}
H_-(x)&:=
\psi_r'(x)g(x)-\psi_r(x)g'(x)=
\psi_r'(x)(1+x)-\psi_r(x),\\
\label{G+}
H_+(x)&:=
\varphi_r(x)g'(x)-\varphi_r'(x)g(x)=
\varphi_r(x)-\varphi_r'(x)(1+x),
\end{align} 
and their  derivatives for $x>-1$ and $x\not= 0$  can be expressed as
\begin{align*}
H_-'(x)&=m(x)\frac d{dm}H_-(x)=m(x)\psi_r(x)\left(r(1+x)-\frac d{dm}\frac d{dx} (1+x)\right)\\
&=m(x)\psi_r(x)r(1+x),
\end{align*}
and, similarly, 
\begin{equation*}\label{dH+}
H'_+(x)=-m(x)\varphi_r(x)r(1+x),
\end{equation*}
where it is used that  $\varphi_r$ and $\psi_r$ solve \eqref{gen}.
Observe now that the function $H_-$ in (\ref{H-}) has a unique positive root, 
since for $x>-1$ the derivative is strictly positive, 
$H_-(-1)=-\psi_r(-1)<0$, and $H_-(x)\to\infty$ as $x\to\infty$.
Therefore, equation \eqref{eqH} has a unique solution as claimed.
The rest of the proof is standard, see for instance \cite{Salminen} or the detailed proof of Proposition \ref{prop1} below.
Statement \eqref{root} follows since $H_-(0)=\sqrt{2r}/\sigma_1-1$.
\end{proof}

Next we study the OSP \eqref{osp} for OBM with $0<\sigma_1\leq\sigma_2$ 
and the reward function
\begin{equation}\label{q}
g(x)=
\begin{cases}
(1+x)^2, & x>-1,\\
0,& x\leq-1.
\end{cases}
\end{equation}
In this situation it is seen that, for some specific values of the parameters,
the continuation region is disconnected.
The approach to analyze this problem is similar as the one in \cite{MordeckiSalminen}.

Define now
\begin{align}\label{G-}
G_-(x):=&
\psi_r'(x)g(x)-\psi_r(x)g'(x)\notag\\
=&(1+x)\left(
\psi_r'(x)(1+x)-2\psi_r(x)\right),\\
\label{G+}
G_+(x):=&
\varphi_r(x)g'(x)-\varphi_r'(x)g(x)\notag\\
=&
2\varphi_r(x)(1+x)-\varphi_r'(x)(1+x)^2.
\end{align} 
These functions are used below to verify the excessivity of the proposed value function. 
The derivatives for $x>-1$ and $x\not= 0$ are 
\begin{align}
G'_-(x)&=
m(x)\psi_r(x)
\begin{cases}
r(1+x)^2-\sigma_1^2,& x<0,\\
r(1+x)^2-\sigma_2^2,& x>0,\\
\end{cases}
\label{dG-}
\\
G'_+(x)
&=m(x)\varphi_r(x)
\begin{cases}
\sigma_1^2-r(1+x)^2,& x<0,\\ 
\sigma^2_2-r(1+x)^2,& x>0.\\
\end{cases}
\label{dG+}
\end{align}
\begin{figure}[h]
\centering
\includegraphics[scale=0.25]{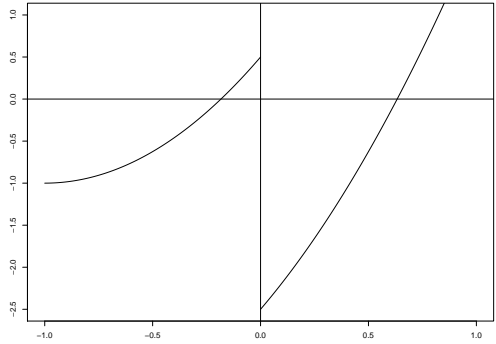}
\caption{The sign of the derivative $G_-'$ is ruled by the depicted above function 
$r(1+x)^2-\sigma(x)^2$.
Here the parameters are $r=1.5$,
 $\sigma_1=1$, $\sigma_2=2$.}
\label{figure:2}
\end{figure}

\begin{proposition} \label{prop1} In case $0<r\leq\sigma_1^2\leq\sigma_2^2$ the continuation region for  OSP
(\ref{osp}) is given by
$$
{\rm C}_r=(-\infty,c),
$$
where $c=c(r)$ is the unique positive solution of the equation
\begin{equation}\label{eqG}
\psi_r'(x)(1+x)-2\psi_r(x)=0,\quad x\geq -1.
\end{equation}
\end{proposition}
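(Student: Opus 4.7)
My plan is to adapt the verification scheme of Proposition \ref{prop0}: pick $c$ as the unique positive root of \eqref{eqG}, form the natural candidate
$$
\widehat V(x)=\begin{cases}\dfrac{(1+c)^2}{\psi_r(c)}\,\psi_r(x),& x\le c,\\ (1+x)^2,& x>c,\end{cases}
$$
which on $\{x\le c\}$ equals $g(c)\,\E_x[\e^{-rH_c}]$ (OBM is recurrent, since $S(\pm\infty)=\pm\infty$), and then apply Proposition \ref{apu} after checking that $\widehat V$ is $r$-excessive and dominates $g$.

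To locate $c$, I would read off \eqref{dG-} under the standing assumption $r\le\sigma_1^2\le\sigma_2^2$: on $(-1,0)$ one has $(1+x)^2<1\le\sigma_1^2/r$ so $G_-'<0$; on $(0,\sigma_2/\sqrt r-1)$ one still has $r(1+x)^2<\sigma_2^2$ so $G_-'<0$; beyond $\sigma_2/\sqrt r-1\ge 0$, $G_-'>0$. Since $G_-(-1)=0$ and $G_-(x)\to\infty$ (from $\psi_r(x)\sim B_1\e^{\lambda_2^+ x}$ for large $x$), this shows $G_-<0$ on $(-1,\sigma_2/\sqrt r-1]$ followed by strict increase to $\infty$, yielding a unique root $c>\sigma_2/\sqrt r-1\ge 0$. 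The factorization $G_-(x)=(1+x)(\psi_r'(x)(1+x)-2\psi_r(x))$ identifies this $c$ as the unique positive solution of \eqref{eqG}, and records the crucial sign information $G_-<0$ on the whole of $(-1,c)$.

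The smooth fit at $c$, i.e.\ $\widehat V'(c-)=\widehat V'(c+)$, boils down to $(1+c)\psi_r'(c)=2\psi_r(c)$, which is precisely \eqref{eqG} at $c$; together with the $C^1$-matching of $\psi_r$ at $0$ built into \eqref{eq:psi}, this gives $\widehat V\in C^1(\R)$. For the inequality $\widehat V\ge g$, the case $x\le -1$ is trivial; on $(-1,c]$ I would examine $f(x):=(1+x)^2/\psi_r(x)$, for which a direct calculation yields $f'(x)=-G_-(x)/\psi_r(x)^2\ge 0$ by the sign analysis above, hence $f(x)\le f(c)=(1+c)^2/\psi_r(c)$, which is exactly $\widehat V(x)\ge g(x)$; on $[c,\infty)$ the two functions coincide.

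It remains to verify $r$-excessivity. On $(-\infty,c]$, $\widehat V$ is a positive multiple of $\psi_r$, hence $r$-harmonic in the interior. On $(c,\infty)\subset(0,\infty)$ the generator is $(\sigma_2^2/2)\,d^2/dx^2$, so $(r-\LL)\widehat V(x)=r(1+x)^2-\sigma_2^2\ge r(1+c)^2-\sigma_2^2\ge 0$ because $c\ge\sigma_2/\sqrt r-1$. The $C^1$ regularity at $0$ and at $c$ upgrades these pointwise inequalities to global $r$-excessivity, and Proposition \ref{apu} applied with $A=[c,\infty)$ closes the argument. The main delicate point, in contrast with Proposition \ref{prop0}, is that $G_-$ is no longer monotone on $(-1,\infty)$—it first decreases, then increases—so one has to place $c$ on the correct (increasing) side of the critical point $\sigma_2/\sqrt r-1$ and exploit the sign of $G_-$ throughout $(-1,c)$ to secure $\widehat V\ge g$; this same non-monotonicity is what, in the complementary parameter range, disconnects the continuation region.
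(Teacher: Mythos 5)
Your proposal is correct, and on the two substantive points---the location of the root $c$ via the sign of $G_-'$ in \eqref{dG-} (decreasing on $(-1,\sigma_2/\sqrt r-1)$, increasing thereafter, hence a unique zero $c>\sigma_2/\sqrt r-1\ge 0$) and the majorant property via the monotonicity of $g/\psi_r$, whose derivative has the sign of $-G_-$---it coincides with the paper's argument. The one place you diverge is the excessivity check: the paper verifies it through the representation theory of excessive functions, showing that $I_V(x)=\psi_r'\widehat V-\psi_r\widehat V'$ is non-decreasing (it vanishes for $x\le c$ and equals $G_-$ on $[c,\infty)$) and that $D_V=\varphi_r\widehat V'-\varphi_r'\widehat V$ is positive and decreases to $0$, so that the pair induces a representing (probability) measure; you instead argue that $\widehat V$ is $r$-harmonic on $(-\infty,c)$, satisfies $\rl\widehat V=r(1+x)^2-\sigma_2^2\ge 0$ on $(c,\infty)$, and is $C^1$ across $0$ and $c$. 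These are the same inequality in different clothing, since $\frac{d}{dm}I_V=\psi_r\,\rl\widehat V$ and $\frac{d}{dm}D_V=-\varphi_r\,\rl\widehat V$; the paper's route has the advantage of packaging the boundary behaviour at $\pm\infty$ into the representation (no appeal to an external verification theorem), whereas your route leans on the standard but unproved claim that a nonnegative $C^1$ function with $\rl\widehat V\ge 0$ a.e.\ is $r$-excessive for a diffusion with discontinuous coefficients---true (cf.\ the verification results of R\"uschendorf and Urusov cited in the paper, or an It\^o--Tanaka argument using the continuity of $\widehat V'$ at $0$ and $c$ to kill the local-time terms), but worth a citation or a line of justification.
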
 

\begin{proof} 
We show first that equation (\ref{eqG}) has a unique positive solution. For this consider for $x>-1$  the function 
$G_-$ defined in \eqref{G-}, the claim is equivalent with the statement that  $G_-$ has a unique positive zero. 
In fact, we claim a bit more; namely that  the function $G_-$ attains the global minimum at 
$x_0:=\sigma_2/\sqrt{r}-1>0,$ is negative and decreasing for $x\leq x_0,$ is increasing for $x>x_0,$ and has, therefore,  a unique zero. 
Analyzing $G'_-$ as given in \eqref{dG-}, 
 it is straightforward to deduce, since 
 $0<r\leq\sigma_1^2\leq\sigma_2^2,$ the claimed properties of $G_-.$ 
 
Let 
$$
H_c:=\inf\{t\, :\, X_t\geq c\},
$$
where $c$ is the unique solution of (\ref{eqG}), and define
\begin{equation}\label{hatV}
\widehat V(x):=\E_{x}\big[\textrm{e}^{-r H_c} g(X_{H_c})\big]=\begin{cases}\displaystyle{\frac{\psi_r(x)}{\psi_r(c)}}\,g(c),& x\leq c,\\ 
g(x) ,& x\geq c.\\
\end{cases}
\end{equation}
If $\widehat V$ is an $r$-excessive majorant of $g$ it follows from Proposition \ref{apu} that $\widehat V$ is the value function of OSP (\ref{osp}).  The excessivity can be checked with the method based on the representation theory of excessive functions (cf. \cite{Salminen} Section 3).   This boils down to study for $x\not= -1$ the functions 
\begin{align*}
I_V(x)&:=\psi'_r(x)\widehat V(x)-\psi_r(x)\widehat V'(x),\\
D_V(x)&:=\varphi_r(x)\widehat V'(x)-\widehat V(x)\varphi_r'(x).
\end{align*}
Clearly, $I_V(x)=0$ for $x\leq c$ and increasing for $x>c.$ Notice that $I_V=G_-$ on $[c,+\infty).$  Studying the derivative 
(w.r.t. the speed measure)  of $D_V$ it is easily seen that $D_V$ is positive and decreasing to 0 on $[c,+\infty).$ 
Consequently, $I_V$ and $D_V$ induce a (probability) measure which represents $\widehat V$ proving that   $\widehat V$ is $r$-excessive. To prove that $\widehat V$ is a majorant of $g$ consider for $-1<x<c$  
$$
\widehat V(x)\geq g(x) \quad \Leftrightarrow\quad \frac{\psi_r(x)}{g(x)}\geq\frac{\psi_r(c)}{g(c)}.
$$
The right hand side inequality holds since the derivative of $\psi_r(x)/g(x)$ is
 $G_-(x)$ which is negative for  $-1<x<c,$ as is shown above.
\end{proof}
If the volatilities are close enough, the problem is one sided for all discount values.
This is made precise in the next result.
\begin{proposition} \label{prop2} In case  $0\leq\sigma_1^2\leq\sigma_2^2\leq 2\sigma_1^2$ the continuation region for the OSP (\ref{osp})
is given by
$$
{\rm C}_r=(-\infty,c),
$$
where $c=c(r)$ is the unique solution of equation (\ref{eqG}).
As $r$ increases from $0$ to $+\infty$,  
$c(r)$ decreases monotonically  from $+\infty$ to $-1$. In particular, $c(r)=0$ for 
$r=2\sigma_1^2.$ 
\end{proposition}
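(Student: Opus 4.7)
The plan is to reuse the template of Proposition \ref{prop1}: analyze $G_-$ from \eqref{G-}, extract the candidate threshold $c$, build $\widehat V$ as in \eqref{hatV}, and verify that it is an $r$-excessive majorant of $g$ so that Proposition \ref{apu} applies. The new difficulty, compared to Proposition \ref{prop1}, is that when $r>\sigma_1^2$ the sign of $G_-'$ in \eqref{dG-} changes inside $(-1,0)$, and when $r\leq \sigma_2^2$ it also changes in $(0,\infty)$; the graph of $G_-$ may therefore exhibit a local minimum at $x_1:=\sigma_1/\sqrt{r}-1\in(-1,0)$, a local maximum at $0$, and another local minimum at $x_2:=\sigma_2/\sqrt{r}-1\geq 0$. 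The crucial observation is that $G_-(0)=\psi_r'(0)-2\psi_r(0)=\sqrt{2r}/\sigma_1-2$, and the standing hypothesis $\sigma_2^2\leq 2\sigma_1^2$ guarantees $G_-(0)\leq 0$ throughout the delicate range $\sigma_1^2<r\leq \sigma_2^2$, since there $r\leq \sigma_2^2\leq 2\sigma_1^2$.

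I would then split the uniqueness of the zero of \eqref{eqG} into three regimes. For $r\leq \sigma_1^2$ the argument of Proposition \ref{prop1} applies verbatim, producing a unique $c\geq 0$. For $\sigma_1^2<r\leq \sigma_2^2$ the chain of values $G_-(-1)=0$, $G_-(x_1)<0$, $G_-(0)\leq 0$ and $G_-(x_2)<G_-(0)$ shows that $G_-\leq 0$ on $[-1,x_2]$, while $G_-$ is strictly increasing on $(x_2,\infty)$ up to $+\infty$, yielding a unique zero $c\geq 0$. For $r>\sigma_2^2$, $G_-$ is strictly decreasing on $(-1,x_1)$ and strictly increasing on $(x_1,\infty)$, so again there is a unique zero, which lies in $(-1,0)$ exactly when $G_-(0)>0$, i.e. when $r>2\sigma_1^2$. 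In all three regimes $G_-\leq 0$ on $(-1,c)$ and $G_-\geq 0$ on $[c,\infty)$, which is precisely the sign data needed to carry the rest of the Proposition \ref{prop1} proof through unchanged: the auxiliary functions $I_V$ and $D_V$ have the required monotonicity and positivity on $[c,\infty)$, and the majorant inequality $\widehat V\geq g$ on $(-1,c)$ reduces, as before, to $\psi_r/g$ being non-increasing on that interval, i.e., to $G_-\leq 0$ there, which we have just established. Proposition \ref{apu} then identifies $\widehat V$ with the value function and $H_c$ as an optimal stopping time.

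For the monotonicity of $c(r)$, I would invoke the general fact that $r\mapsto V_r(x)$ is non-increasing (since $\e^{-r\tau}g(X_\tau)$ is pointwise non-increasing in $r$), so the stopping region $\Gamma_r=[c(r),\infty)$ expands with $r$ and $c(r)$ is non-increasing; strict monotonicity follows from strict monotonicity of $V_r$ inside the continuation region. For the extreme values, once $c(r)\leq 0$ the equation \eqref{eqG} simplifies, using $\psi_r(x)=\exp(\lambda_1^+ x)$ on $x\leq 0$, to $\lambda_1^+(1+c)=2$, giving the explicit formula $c(r)=\sigma_1\sqrt{2/r}-1$; this vanishes at $r=2\sigma_1^2$ (proving the last assertion of the proposition) and tends to $-1$ as $r\to\infty$. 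The limit $c(r)\to +\infty$ as $r\to 0^+$ follows from the asymptotic $c(r)\sim \sigma_2\sqrt{2/r}-1$ extracted from the dominant exponential term of $\psi_r$ for large $x>0$. The main obstacle is the case analysis for the sign of $G_-$ in the intermediate regime $\sigma_1^2<r\leq \sigma_2^2$, which the hypothesis $\sigma_2^2\leq 2\sigma_1^2$ is precisely designed to keep under control.
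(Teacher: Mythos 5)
Your proposal is correct and follows essentially the same route as the paper: split into the regimes $r\leq\sigma_1^2$, $\sigma_1^2<r\leq\sigma_2^2$ and $r\geq\sigma_2^2$, use the sign of $G_-'$ from \eqref{dG-} together with the key observation $G_-(0)=\sqrt{2r}/\sigma_1-2\leq 0$ (which is exactly what the hypothesis $\sigma_2^2\leq 2\sigma_1^2$ buys) to get a unique root of \eqref{eqG}, and then run the verification of Proposition \ref{prop1} unchanged. You in fact supply slightly more than the paper's proof does, namely an argument for the monotonicity of $c(r)$ (via monotonicity of $r\mapsto V_r$ and the explicit formula $c(r)=\sigma_1\sqrt{2/r}-1$ once $c\leq 0$) and the limits at $r\to 0^+$ and $r\to\infty$, which the paper states but does not prove.
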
 
\begin{proof} If $r\leq\sigma_1^2$ the statement is the same as in Proposition \ref{prop1}. 
We assume next that $r\geq \sigma_2^2.$  The proof in this case is very similar to the proof of  Proposition \ref{prop1}. 
It can be seen that  $G_-$ attains the global minimum at 
$x_1:=\sigma_1/\sqrt{r}-1<0,$ is negative and decreasing for $x\leq x_1,$ is increasing for $x>x_1,$ and has, therefore,  a unique zero. Consequently, this root can be taken to be an optimal stopping point $c=c(r)$ and the analogous function $\widehat V$ as in (\ref{hatV}) can be proved to be the value of OSP (\ref{osp}). 
Finally, assume $\sigma_1^2<r<\sigma_2^2\leq2\sigma_1^2$  In this case, $G_-$ has a local maximum at 0, which is negative since 
\begin{equation}\label{gm0}
G_-(0)=\psi'_r(0)-2\psi_r(0)=\lambda^+_1-2={\sqrt{2r}\over \sigma_1}-2\leq 0. 
\end{equation}
Clearly, $G_-(0)=0$ (and then $c(r)=0$) when $r=2\sigma_1^2$.
Hence, equation (\ref{eqG}) has a unique positive root and the proof can be completed as in the previous cases.  
\end{proof}
\begin{proposition} \label{at0} 
Assume $0\leq\sigma_1^2\leq\sigma_2^2\leq 2\sigma_1^2$.
For  $r\geq 2\sigma_1^2$ there exist $A$ and $B$ such that the function
\begin{equation}\label{sfF}
F(x):=\begin{cases} A\exp(\lambda_1^+x)+B\exp(\lambda_1^-x),& x\leq 0,\\ (1+x)^2,& x\geq 0,\end{cases}
\end{equation}
satisfies the principle of smooth fit at 0, i.e., $F'(0-)=F'(0+)=2.$ 
The function $F$ is $r$-harmonic (and positive) on  $(-\infty,0)$ but not $r$-excessive if  $r<\sigma_2^2.$ 
For $r< 2\sigma_1^2$ the coefficient $B$ is negative and the function $F(x)\to -\infty$ as $x\to -\infty$ (and the function is not  $r$-excessive).
\end{proposition}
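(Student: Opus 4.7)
The proposition bundles four essentially independent claims, so I would proceed modularly. The first step is to solve the $2\times 2$ linear system enforcing smooth fit at $0$: the equations $F(0-)=F(0+)=1$ and $F'(0-)=F'(0+)=2$ read $A+B=1$ and $(A-B)\lambda_1^+=2$. Since $\lambda_1^+=\sqrt{2r}/\sigma_1$, the unique solution is
\[
A=\tfrac12\bigl(1+\sigma_1\sqrt{2/r}\bigr),\qquad B=\tfrac12\bigl(1-\sigma_1\sqrt{2/r}\bigr),
\]
from which $B\ge 0$ iff $r\ge 2\sigma_1^2$ and $B<0$ iff $r<2\sigma_1^2$, while $A>0$ always.

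The $r$-harmonicity of $F$ on $(-\infty,0)$ is automatic from the construction: the exponentials $\exp(\lambda_1^\pm x)$ are by design the two fundamental solutions of \eqref{gen} on the negative half-line (where $\mathcal{L}=\tfrac12\sigma_1^2\partial_{xx}$), so any linear combination is $r$-harmonic there. Positivity of $F$ on $(-\infty,0)$ in the regime $r\ge 2\sigma_1^2$ then follows immediately from $A>0$ and $B\ge 0$.

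For the non-excessivity statement when $r<\sigma_2^2$, I would work on the right half-line, where $F(x)=(1+x)^2$ is smooth and $\mathcal{L}F\equiv\sigma_2^2$. Thus
\[
(r-\mathcal{L})F(x)=r(1+x)^2-\sigma_2^2,
\]
which is strictly negative in a right-neighborhood of $0$. Since any $r$-excessive function that is $C^2$ on an open interval must satisfy $\mathcal{L}F\le rF$ there (a consequence of Dynkin's formula applied to the exit time of a small sub-interval contained in $(0,\infty)$), $F$ cannot be $r$-excessive in this regime.

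The remaining claim is easier: when $r<2\sigma_1^2$ the formula above gives $B<0$, and since $\lambda_1^-<0$ we have $\exp(\lambda_1^-x)\to+\infty$ as $x\to-\infty$ while $\exp(\lambda_1^+x)\to 0$; hence $F(x)\to-\infty$, which rules out $r$-excessivity (excessive functions being non-negative). The proof is essentially computational, and the only point requiring a moment's care is the invocation of the pointwise generator characterization of excessivity on the open right half-line, which is a standard fact in the It\^o--McKean framework.
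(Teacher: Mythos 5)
Your proposal is correct, and in fact more complete than the paper's own proof: the authors treat only the boundary case $r=r_0:=2\sigma_1^2$ (where $A=2$, $B=0$, $F(x)=2\e^x$ for $x\le 0$) and explicitly leave the remaining cases to the reader, whereas you solve the smooth-fit system for general $r$ and read off the sign of $B$ directly. The one place where your route genuinely differs is the non-excessivity argument for $r<\sigma_2^2$: the paper invokes the representation theory of excessive functions, showing that the representing function $x\mapsto \psi_r'(x)F(x)-\psi_r(x)F'(x)$ fails to be non-decreasing because its derivative with respect to the speed measure equals $\psi_r(x)\bigl(r(1+x)^2-\sigma_2^2\bigr)$ for $x>0$; you instead use the pointwise generator characterization $\mathcal{L}F\le rF$ via Dynkin's formula. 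Both arguments hinge on exactly the same quantity, the sign of $r(1+x)^2-\sigma_2^2$ near $0+$, so the difference is one of packaging: the representation-theoretic criterion is the tool the paper uses systematically elsewhere (e.g.\ in Proposition \ref{prop1}), while your generator argument is more self-contained and does not require the machinery of representing measures. One minor observation you might have flagged: the stated hypothesis $\sigma_2^2\le 2\sigma_1^2$ sits awkwardly with the clause ``not $r$-excessive if $r<\sigma_2^2$'' for $r\ge 2\sigma_1^2$ (and with the paper's own proof, which assumes $r_0=2\sigma_1^2<\sigma_2^2$); the proposition is invoked in Theorem \ref{prop3} under $2\sigma_1^2<\sigma_2^2$, and your computations are valid in that regime without modification.
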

\begin{proof} We study only the case $r=r_0:=2\sigma_1^2$ and leave the details of the other cases to the reader. 
In this case $\lambda^+_1 ={\sqrt{2r}/\sigma_1}=1,$ and, obviously, 
$$ 
F(x):=\begin{cases} 2\,\e^x,& x\leq 0,\\ (1+x)^2,& x\geq 0,\end{cases}
$$
satisfies smooth fit at 0. 
 Consequently, $F$ is $r_0$-harmonic (and positive) on $(-\infty,0)$ and  it remains to prove that $F$ is not $r_0$-excessive. For this,  consider the representing function (this corresponds $G_-$ in (\ref{G-}))
$$
x\mapsto \psi_{r_0}'(x)F(x)-\psi_{r_0}(x)F'(x).
$$
The claim is that this function is not non-decreasing. 
Indeed, differentiate w.r.t. the speed measure to obtain
\begin{align*}
&\frac{d}{dm}\Big(\psi_{r_0}'(x)F(x)-\psi_{r_0}(x)F'(x)\Big)\\
&\hskip3cm=F(x)\frac{d}{dm}\frac{d}{dx}\psi_{r_0}(x)-\psi_{r_0}(x)\frac{d}{dm}\frac{d}{dx}F(x)\\
&\hskip3cm=\psi_{r_0}(x)\begin{cases} 0,& x<0,\\
{r_0}(1+x)^2-\sigma_2^2,& x>0.\\
\end{cases}
\end{align*}
Since $r_0=2\sigma_1^2<\sigma_2^2$ this derivative is negative, e.g.,  for small positive $x$-values; therefore,  $F$ is not ${r_0}$-excessive.   
 \end{proof}

For the theorem to follow, which can be seen as our main result concerning OSP (\ref{osp}), we need the following technical result.
\begin{lemma}\label{apu2} 
Consider a family $\{h_r\colon\R\to[0,\infty)\,;\, r\in I\}$ such that for each $r\in I\subset\R$ the function $h_r$ is $r$-excessive.
Assume that this family is dominated 
by a function $\widehat{h}$ (i.e. $h_r\leq \widehat{h}$) such that $\E_x(\widehat{h}(X_t))<\infty$ for all  $t\geq 0$ and $x\in\R$.  
Then, if for some $r_0$ the limit
$$
\lim_{r\to r_0}h_r(x)\to h_0(x),
$$
exists for all $x\in\R$, the function $h_0$ is $r_0$-excessive.
\end{lemma}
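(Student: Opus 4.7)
The plan is to verify that $h_0$ satisfies the two defining properties of an $r_0$-excessive function:
\begin{align*}
\text{(a)}\quad& \e^{-r_0 t}\,\E_x[h_0(X_t)] \leq h_0(x),\quad t\geq 0,\ x\in\R,\\
\text{(b)}\quad& \lim_{t\downarrow 0}\e^{-r_0 t}\,\E_x[h_0(X_t)] = h_0(x),\quad x\in\R.
\end{align*}

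For (a), the starting point is the $r$-excessivity inequality $\e^{-rt}\E_x[h_r(X_t)]\leq h_r(x)$, valid for every $r\in I$. The family $\{h_r(X_t)\}_{r\in I}$ is dominated by the $\Po_x$-integrable random variable $\widehat{h}(X_t)$, so the dominated convergence theorem applies as $r\to r_0$ and gives $\E_x[h_r(X_t)]\to\E_x[h_0(X_t)]$. Combined with the trivial limits $\e^{-rt}\to\e^{-r_0 t}$ and $h_r(x)\to h_0(x)$, this yields (a).

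For (b), the inequality $\limsup_{t\downarrow 0}\leq h_0(x)$ follows from (a) together with the monotonicity of $t\mapsto\e^{-r_0 t}\E_x[h_0(X_t)]$ (a consequence of (a) and the Markov property). For the reverse direction I would exploit that the $r$-excessivity of a nonnegative function propagates to every larger discount: if $r_n\in I$ with $r_n\leq r_0$, then $h_{r_n}$ is already $r_0$-excessive, and in particular $\lim_{t\downarrow 0}\e^{-r_0 t}\E_x[h_{r_n}(X_t)]=h_{r_n}(x)$. Condition (b) thus reduces to interchanging the limits $n\to\infty$ at fixed $t$ and $t\downarrow 0$ at fixed $n$, which is the main obstacle.

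To handle this interchange I would recast the problem in the language of the resolvent: $h_0$ is $r_0$-excessive if and only if the supermean-valued property (a) holds and $\alpha R^{r_0}_\alpha h_0(x)\nearrow h_0(x)$ as $\alpha\to\infty$, where $R^{r_0}_\alpha h(x):=\E_x\!\int_0^\infty\e^{-(r_0+\alpha)s}h(X_s)\,ds$. For each fixed $\alpha$, dominated convergence (again justified by $\widehat{h}$) yields $\alpha R^{r_0}_\alpha h_{r_n}(x)\to \alpha R^{r_0}_\alpha h_0(x)$ as $n\to\infty$; for each fixed $n$, the $r_0$-excessivity of $h_{r_n}$ gives $\alpha R^{r_0}_\alpha h_{r_n}(x)\nearrow h_{r_n}(x)$, whose $n\to\infty$ limit is $h_0(x)$. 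The monotonicity in $\alpha$ together with the uniform domination by $\widehat{h}$ permits a standard exchange-of-limits argument, forcing $\alpha R^{r_0}_\alpha h_0(x)\nearrow h_0(x)$, which is exactly (b).
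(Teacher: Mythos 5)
Your step (a) reproduces the paper's entire proof: the authors verify only the supermean-valued inequality $\e^{-r_0t}\,\E_x[h_0(X_t)]\leq h_0(x)$ by dominated convergence with dominating variable $\widehat h(X_t)$, exactly as you do, and they do not address the limit condition (b) at all. So on the part the paper actually proves you are in full agreement with it.

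Your attempt at (b), however, has a genuine gap at its final step. Having reduced matters to showing $\alpha R^{r_0}_\alpha h_0(x)\nearrow h_0(x)$, you appeal to ``a standard exchange-of-limits argument'' based on monotonicity in $\alpha$ and domination by $\widehat h$. No such argument exists: the array $f_n(\alpha)=\alpha/(\alpha+n)$ is increasing in $\alpha$, decreasing in $n$ and bounded by $1$, yet $\lim_{\alpha\to\infty}\lim_{n\to\infty}f_n(\alpha)=0\neq 1=\lim_{n\to\infty}\lim_{\alpha\to\infty}f_n(\alpha)$. This is not a removable technicality, since the assertion $\alpha R^{r_0}_\alpha h_0\nearrow h_0$ is precisely the statement that the excessive regularization of the supermean-valued function $h_0$ loses nothing, i.e.\ it \emph{is} condition (b); the interchange is therefore the whole problem, not a routine step. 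Two smaller defects: your reduction uses $r_n\leq r_0$ (excessivity propagates only to larger discounts), whereas the lemma is invoked in Theorem \ref{prop3} with $r\downarrow r_0$ as well; and the dominated-convergence step for the resolvent requires $\E_x\int_0^\infty\e^{-(r_0+\alpha)s}\widehat h(X_s)\,ds<\infty$, which does not follow from $\E_x[\widehat h(X_t)]<\infty$ for each fixed $t$. A route that does work in this one-dimensional setting is structural rather than limit-theoretic: each $h_r$ satisfies $h_r(x)\geq\E_x[\e^{-rH}h_r(X_H)]$ for the exit time $H$ from any interval $(a,b)\ni x$, an $r$-concavity inequality in terms of $\varphi_r,\psi_r$ whose coefficients depend continuously on $r$; the inequality survives the pointwise limit, forces $h_0$ to be continuous, and then Fatou's lemma together with continuity of paths gives $\liminf_{t\downarrow 0}\e^{-r_0t}\E_x[h_0(X_t)]\geq h_0(x)$, which combined with (a) yields (b).
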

\begin{proof}
Consider
\begin{align*}
\mathbb{E}_x\left[\textrm{e}^{-r_0t}h_{0}(X_t)\right]&=\mathbb{E}_x\big[\lim_{r\to r_0}{\textrm{e}}^{-rt}h_{r}(X_t)\big]
=\lim_{r\to r_0}\mathbb{E}_x\left[{\textrm{e}}^{-rt}h_{r}(X_t)\right]\\ 
&\leq\lim_{r\to r_0}h_r(x) 
=h_{0}(x),
\end{align*}
where in the second step we use the dominated convergence theorem which is applicable since 
${\textrm{e}}^{-rt}h_r(X_t)\leq \widehat{h}(X_t)$.
\end{proof}
The theorem below states  that if  $r\in(2\sigma_1^2, \sigma_2^2)$ but is close enough to $\sigma_2^2$ 
then ${\rm C}_r$ has a bubble (i.e. an isolated bounded interval in the continuation region). 
However, the bubble disappears when  $r$ becomes larger than $\sigma_2^2$ 
or tends to $2\sigma_1^2$.  

\begin{theorem} \label{prop3} 
In case  $0<\sigma_1^2<2\sigma_1^2<\sigma_2^2$ there exists $r_0\in(2\sigma_1^2,\sigma_2^2)$ with the following properties:  
\begin{description}
 \item{\rm(a)}\hskip.2 cm  If $r\in[r_0,\sigma_2^2)$ the continuation region is given by
 $$
{\rm C}_r=(-\infty,c_1)\cup (c_2,c_3),
$$
where $c_i=c_i(r),\, i=1,2,3,$ are such that 
$-1<c_1\leq c_2\leq 0<c_3$.
In particular, for $r=r_0$ it holds $c_1=c_2<0.$
\item{\rm(b)}\hskip.2 cm If $r\geq \sigma_2^2$ the continuation region is explicitly given by 
 $$
{\rm C}_r=(-\infty,c_-),
$$
where 
$$
c_-=c_-(r)={2\sigma_1\over\sqrt{2r}}-1<0,
$$ 
i.e. $c_-$ is the unique solution of (\ref{eqG}). 
\item{\rm(c)}\hskip.2 cm If $r<r_0$ the continuation region is given by 
 $$
{\rm C}_r=(-\infty,c_+),
$$
where $c_+=c_+(r)>0$ is the unique solution of (\ref{eqG}).
 \end{description}

\end{theorem}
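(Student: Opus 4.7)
The plan is to construct, for each of the three parameter ranges (a), (b), (c), a candidate value function $\widehat V$ and verify optimality via Proposition~\ref{apu}. Cases (b) and (c) are one-sided and follow the template of Propositions~\ref{prop1} and~\ref{prop2}, while case (a) is the substantive one.

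For case (b), using the explicit form $\psi_r(x)=\e^{\lambda_1^+x}$ on $(-\infty,0)$, the equation $G_-(x)=0$ reduces to $\lambda_1^+(1+x)=2$, yielding $c_-=\sqrt{2}\sigma_1/\sqrt{r}-1$, which lies in $(-1,0)$ because $r\geq\sigma_2^2>2\sigma_1^2$. From~\eqref{dG-} one checks that $G_-$ is non-decreasing on $(c_-,\infty)$: on $(c_-,0)$ the condition $r(1+x)^2\geq\sigma_1^2$ holds with equality at $x=c_-$ and improves to the right, while on $(0,\infty)$ it reduces to $r\geq\sigma_2^2$. The rest of the verification is the argument of Proposition~\ref{prop1}. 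For case (c), when $r\leq 2\sigma_1^2$ one has $G_-(0)\leq 0$ and the argument of Proposition~\ref{prop2} applies, producing a unique positive zero $c_+$ of $G_-$. For $r\in(2\sigma_1^2,r_0)$, however, $G_-(0)>0$ and the sign analysis from~\eqref{dG-} (cf.\ Figure~\ref{figure:2}) gives three zeros of $G_-$, namely a negative one $c_1'\in(x_1,0)$ and two positive ones $c_2',c_3'$ with $0<c_2'<x_0<c_3'$; I would identify $c_+:=c_3'$ and show, using the defining property of $r_0$ below, that the global minimum of $\psi_r(\cdot)/g(\cdot)$ on $(-1,\infty)$ is still attained at $c_3'$.

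For case (a) the candidate has the piecewise form $\widehat V=A\psi_r$ on $(-\infty,c_1]$, $\widehat V=g$ on $[c_1,c_2]$, $\widehat V=B_1\psi_r+B_2\varphi_r$ on $[c_2,c_3]$, and $\widehat V=g$ on $[c_3,\infty)$. Smooth fit at the three boundaries yields $G_-(c_1)=0$ together with $G_-(c_2)=G_-(c_3)$ and $G_+(c_2)=G_+(c_3)$; the latter two identities rely on the Wronskian $\psi_r\varphi_r'-\psi_r'\varphi_r$ being constant on $\R$, which follows from $S(x)=x$ and the piecewise-constant character of the infinitesimal operator of OBM. My plan is to parametrize bubble candidates $(c_2,c_3)$ straddling the origin by the common level $\mu=G_-(c_2)=G_-(c_3)$, which is well defined for $\mu\in[G_-(x_0),G_-(0)]$ by the three-valued level-set structure of $G_-$ (local minimum at $x_1<0$, local maximum at $0$, local minimum at $x_0>0$), and then to pick out via a monotonicity-in-$\mu$ argument the unique $\mu$ compatible with the $G_+$-equality. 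Excessivity inside the bubble is automatic because $B_1\psi_r+B_2\varphi_r$ solves~\eqref{gen}; at the boundaries the representing functions $I_{\widehat V}$ and $D_{\widehat V}$ from the proof of Proposition~\ref{prop1} need to have the appropriate monotonicity, which reduces again to a (localized) sign analysis of $G_\pm$.

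The threshold $r_0$ is then characterized as the unique $r\in(2\sigma_1^2,\sigma_2^2)$ at which the matching system above admits a degenerate solution with $c_1=c_2<0$; equivalently, as the $r$ at which the one-sided candidate of case (c) loses the majorization property because the smaller local minimum of $\psi_r/g$ at $c_1'$ catches up with its value at $c_+=c_3'$. Existence and uniqueness of $r_0$ follow from continuity in $r$ together with the fact that slightly above $r=2\sigma_1^2$ the one-sided candidate still dominates $g$, while for $r$ close to $\sigma_2^2$ it does not (cf.\ the non-excessivity displayed in Proposition~\ref{at0}). Lemma~\ref{apu2} is then used to transport excessivity to the endpoints $r_0$ and $\sigma_2^2$, yielding the limiting identities $c_1(r_0)=c_2(r_0)$ and $\lim_{r\uparrow\sigma_2^2}c_1(r)=c_-(\sigma_2^2)$. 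The main obstacle I anticipate is case (a): solving the three-boundary matching system, establishing the sign pattern $c_1\leq c_2\leq 0<c_3$, and verifying global majorization $\widehat V\geq g$. The local excessivity checks reduce to sign analyses of $G_\pm$, but the global majorization requires comparing values of $\psi_r/g$ at several critical points, and the reversal of these comparisons precisely at $r=r_0$ is what explains the emergence of the bubble.
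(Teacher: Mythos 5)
Your parts (b) and (c) for $r\le 2\sigma_1^2$ follow the paper's own route (the sign analysis of $G_-'$ from \eqref{dG-} and the verification template of Proposition \ref{prop1}), and your free-boundary conditions for the bubble are correctly identified: smooth fit at $c_2,c_3$ together with the constancy of the Wronskian $\psi_r'\varphi_r-\psi_r\varphi_r'$ does give $G_-(c_2)=G_-(c_3)$ and $G_+(c_2)=G_+(c_3)$. But for part (a) -- the heart of the theorem -- you take a genuinely different, constructive route, and that is where the gaps are. The paper never solves the three-boundary matching system. It argues by contradiction: if the region were one-sided for every $r\in[2\sigma_1^2,\sigma_2^2]$, then since $c(2\sigma_1^2)>0$ and $c(\sigma_2^2)<0$, and since $c(r)=0$ is impossible (Theorem \ref{smoothfit1} would force smooth fit at $0$, and Proposition \ref{at0} shows the resulting function is not $r$-excessive for $r<\sigma_2^2$), the non-increasing map $r\mapsto c(r)$ must jump across $0$ at some $r_0$; taking $r\uparrow r_0$ and $r\downarrow r_0$ in \eqref{hatVr} and invoking Lemma \ref{apu2} and Proposition \ref{apu} then produces two distinct value functions for the same problem -- a contradiction. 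Your plan buys explicit boundaries $c_1,c_2,c_3$, which the paper's argument does not deliver, but at the price of a much heavier verification.

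The concrete gaps. First, existence and uniqueness of the bubble solution: the ``monotonicity-in-$\mu$'' argument selecting the unique level with $G_+(c_2)=G_+(c_3)$ is asserted, not proved, and it is not routine -- you must show that along the level curves $G_-(c_2)=G_-(c_3)=\mu$ the difference $G_+(c_2)-G_+(c_3)$ is monotone and changes sign, and then still verify that the resulting $I_{\widehat V}$, $D_{\widehat V}$ are monotone across all four regimes and that $\widehat V\ge g$ globally. Second, your definition of $r_0$ is circular as written: in case (c) for $r\in(2\sigma_1^2,r_0)$ you invoke ``the defining property of $r_0$ below,'' while $r_0$ is later defined as the $r$ at which that very case-(c) candidate loses majorization; you need an independent single-crossing argument for $r\mapsto \psi_r(c_1')/g(c_1')-\psi_r(c_3')/g(c_3')$ to get existence \emph{and} uniqueness of $r_0$, and your appeal to Proposition \ref{at0} does not supply the endpoint behaviour near $\sigma_2^2$, since that proposition concerns the function with smooth fit at $0$, not the one-sided candidate with boundary $c_3'>0$ (indeed, for $r$ close to $\sigma_2^2$ the local minimum $G_-(x_0)$ may become positive and $c_3'$ may cease to exist, which your level-set picture must accommodate). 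Third, ``excessivity inside the bubble is automatic because $B_1\psi_r+B_2\varphi_r$ solves \eqref{gen}'' is too quick: harmonicity on $(c_2,c_3)$ gives that the representing functions are constant there, but excessivity is a global property and requires checking the jumps of $I_{\widehat V}$ and $D_{\widehat V}$ at $c_1$, $c_2$, $c_3$ have the right signs. None of these steps is obviously false, but each is a substantive piece of analysis that your proposal defers, whereas the paper's contradiction argument avoids them entirely.
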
 

\begin{proof} The proof of  (b) is as the proof of Proposition \ref{prop2} when $r\geq \sigma^2_2$. 
Notice, however, that in the present case $c(r)<0$ for all $r\geq \sigma_2^2$.  

We consider next (c) in case $r\leq 2\sigma_1^2$. 
Studying $G'_-$ and $G_-(0)$ in \eqref{dG-} and \eqref{gm0} respectively, 
it is seen, 
as in the proof of Proposition \ref{prop1}, 
that equation $G_-(x)=0$ has for $r<2\sigma_1^2$ one (and only one) root $\rho=\rho(r)>\sigma_2/\sqrt{r}-1>0$.
In case $r=2\sigma_1^2$ there are two roots 
$\rho_1=0$ and 
$\rho_2>\sigma_2/(\sqrt{2}\sigma_1)-1>0.$ Proceeding as in the proof of Proposition \ref{prop1} it is seen that the stopping region is as claimed with $c_+=\rho$ if $r<2\sigma_1^2$ and $c_+=\rho_2$ if $r=2\sigma_1^2$.  

Finally we consider (a).
Assume now that there does not exist a bubble for any  
$r\in[2\sigma_1^2,\sigma_2^2].$  
Then for all $r\in[2\sigma_1^2,\sigma_2^2]$ we can find  $c=c(r)$ such that $\Gamma_r=[c,+\infty).$ 
Knowing that $c(r) >0$ for $r=2\sigma_1^2$ and $c(r)<0$ for $r=\sigma_2^2$ we remark first there does not exists $r$ such that $c(r)=0.$ Indeed, by Theorem \ref{smoothfit1}, the value should satisfy the smooth fit principle at 0 but from Proposition \ref{at0}  we know  that such functions are not $r$-excessive. 
Next, using  $\Gamma_{r_1}\subseteq \Gamma_{r_2}$ for $r_1<r_2$  (cf. Proposition 1 in \cite{MordeckiSalminen})  it is seen that $r\mapsto c(r)$ is non-increasing, and has, hence, left and right limits. Consquently, there exists a unique point 
$r_0$  such that 
$$
\hat c_+:=\lim_{r\uparrow r_0}c(r)>0 \quad {\rm and}\quad \hat c_-:=\lim_{r\downarrow r_0}c(r)<0.
$$ 
Under the assumption that there is no bubble the value function is of the form given in (\ref{hatV}), i.e., 
\begin{align}\label{hatVr} 
V_r(x)&=\begin{cases}\psi_r(x)\frac{(1+c(r))^2}{\psi_r(c(r))},& x\leq c(r),\\ 
(1+x)^2 ,& x\geq c(r).\\
\end{cases}\\
\nonumber
&=\E_x\big(\textrm{e}^{-rH_c}\, (1+X_{H_c})^2\big),
\end{align}  
where $H_c:=\inf\{y\,:\,X_t\geq c(r)\}.$    
For $\widehat{r}$ small enough, there exists an excessive majorant $h_{\widehat{r}}$, 
so that we can apply Proposition \ref{apu2},
since $\E_x \textrm{e}^{-\widehat{r}t}h_{\widehat{r}}(X_t)\leq \widehat{h}(x)<\infty$.
Then, letting in (\ref{hatVr})  $r\uparrow r_0$ yields an $r_0$-excessive function which by Proposition  \ref{apu} is the value of the corresponding OSP (\ref{osp}). Similarly, letting $r\downarrow r_0$ yields an  $r_0$-excessive function which should also be the value of the same OSP. However, the functions are clearly different and since the value is unique we have reached a contradiction showing that there exists at least one bubble, i.e. a bounded open interval $(x_1,x_2)\subseteq {\rm C}_r$ 
with endpoints $x_1$ and $x_2$ in the stopping set (see \cite{MordeckiSalminen}).
Proceeding similarly as in Proposition 6 in \cite{MordeckiSalminen},
it can be seen that if there is a bubble, there is at most one bubble, and it contains the origin, completing the proof.      
\end{proof}

We end this section by considering the case $\sigma_1^2\geq\sigma_2^2$ 
with quadratic reward \eqref{q},
and show that the stopping region in this case is always one sided.
\begin{proposition}
\label{prop00}
Consider the OSP problem \eqref{osp} for OBM with $\sigma_1^2\geq\sigma_2^2$, $r>0$ and $g(x)$ in \eqref{q}. 
For all values of $r>0$, the continuation region for  OSP
(\ref{osp}) is given by
$$
{\rm C}_r=(-\infty,c),
$$
where $c=c(r)>-1$ is the unique solution of the equation \eqref{eqG}.
Furthermore
$$
r\lesseqqgtr 2\sigma_1^2\quad\Rightarrow\quad c(r)\lesseqqgtr 0.
$$ 
\end{proposition}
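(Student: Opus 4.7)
The plan is to reuse the strategy of Propositions \ref{prop1} and \ref{prop2}: after locating the unique zero of the function $G_-$ in \eqref{G-}, the candidate $\widehat V$ given by \eqref{hatV} is shown to be $r$-excessive and to dominate $g$, whereupon Proposition \ref{apu} identifies it with $V_r$.

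The crucial step is a sign analysis of $G_-'$ via \eqref{dG-}: on $(-1,0)$ the sign is that of $r(1+x)^2-\sigma_1^2$ and on $(0,\infty)$ that of $r(1+x)^2-\sigma_2^2$. The new feature compared with Propositions \ref{prop1}--\ref{prop2} is that now, with $\sigma_1^2\geq\sigma_2^2$, the driving function $r(1+x)^2-\sigma(x)^2$ jumps \emph{upward} rather than downward at the origin. I split into three regimes. If $r\leq\sigma_2^2$, then $G_-'\leq 0$ on $(-1,\sigma_2/\sqrt{r}-1)$ and $G_-'>0$ thereafter, with global minimum at $\sigma_2/\sqrt{r}-1\geq 0$; this is exactly the situation of Proposition \ref{prop1}. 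If $\sigma_2^2<r\leq\sigma_1^2$, then $G_-'\leq 0$ on $(-1,0)$ and $G_-'>0$ on $(0,\infty)$, so the global minimum sits at the origin. If $r>\sigma_1^2$, the sign change inside $(-1,0)$ occurs at $\sigma_1/\sqrt{r}-1$; beyond this point $G_-'>0$, and since $r-\sigma_2^2\geq r-\sigma_1^2>0$ the upward jump at $0$ keeps $G_-'$ positive throughout $(\sigma_1/\sqrt{r}-1,\infty)$. In every case $G_-(-1)=0$, the minimum value is strictly negative, and $G_-(x)\to+\infty$ as $x\to\infty$, so equation \eqref{eqG} has a unique root $c>-1$. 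The sign relation with $0$ then drops out of the identity
\begin{equation*}
G_-(0)=\psi_r'(0)-2\psi_r(0)=\lambda_1^+-2=\frac{\sqrt{2r}}{\sigma_1}-2,
\end{equation*}
combined with the strict monotonicity of $G_-$ past its (negative) minimum: $c(r)\lesseqqgtr 0$ according as $G_-(0)\gtreqqless 0$, i.e.\ $r\lesseqqgtr 2\sigma_1^2$.

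The verification is a transcription of the argument in Proposition \ref{prop1}. With $c$ the root above and $\widehat V$ as in \eqref{hatV}, the representation data
\begin{equation*}
I_V(x)=\psi_r'(x)\widehat V(x)-\psi_r(x)\widehat V'(x),\qquad D_V(x)=\varphi_r(x)\widehat V'(x)-\widehat V(x)\varphi_r'(x)
\end{equation*}
satisfy $I_V\equiv 0$ on $(-\infty,c]$ and $I_V=G_-\geq 0$, non-decreasing on $[c,\infty)$ by the previous step, while differentiating with respect to the speed measure shows that $D_V$ is positive and decreases to $0$ on $[c,\infty)$; hence $\widehat V$ is $r$-excessive. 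The inequality $\widehat V\geq g$ on $(-1,c)$ reduces to $\psi_r(x)/g(x)\geq\psi_r(c)/g(c)$, which follows from $G_-<0$ on $(-1,c)$. Proposition \ref{apu} then yields $\widehat V=V_r$ with optimal stopping time $H_c$. The main bookkeeping obstacle is the regime $r>\sigma_1^2$, where one has to confirm that the discontinuity of $\sigma^2$ at the origin does not produce a second sign change of $G_-'$ to the right of $\sigma_1/\sqrt{r}-1$; but this is precisely the observation $r-\sigma_2^2\geq r-\sigma_1^2>0$ noted in the sign analysis.
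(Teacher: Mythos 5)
Your proposal follows the paper's own proof of this proposition in all essentials: the paper likewise reduces everything to the observation that, because $\sigma_1^2\geq\sigma_2^2$, the derivative $G_-'$ in \eqref{dG-} changes sign exactly once (from negative to positive), deduces from this the unique root of \eqref{eqG}, and then runs the verification scheme of Propositions \ref{prop0} and \ref{prop1}. Your three-regime case analysis of where the single sign change occurs is in fact more detailed than the paper's one-line justification, and it correctly isolates the key structural point, namely that the jump of $r(1+x)^2-\sigma(x)^2$ at the origin is upward when $\sigma_1^2\geq\sigma_2^2$, so no second sign change can be created there.

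There is one slip at the very end. Since $G_-(0)=\sqrt{2r}/\sigma_1-2$, the condition $G_-(0)>0$ is equivalent to $r>2\sigma_1^2$, not to $r<2\sigma_1^2$. Combined with your (correct) observation that $c(r)<0$ exactly when $G_-(0)>0$, the conclusion is that $c(r)<0$ iff $r>2\sigma_1^2$, which is consistent with $r\mapsto c(r)$ being non-increasing and with $c(2\sigma_1^2)=0$ (cf.\ Proposition \ref{prop2}). Your final ``i.e.''\ therefore pairs the inequalities the wrong way around; the displayed implication in the statement must for the same reason be read with the two $\lesseqqgtr$ symbols in opposite orientations (i.e.\ as $r\lesseqqgtr 2\sigma_1^2\Rightarrow c(r)\gtreqqless 0$). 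The paper's own proof sidesteps this by appealing only to the monotonicity of $c(r)$ together with $c(2\sigma_1^2)=0$, so your argument via the sign of $G_-(0)$ is slightly more explicit but needs that one direction corrected.
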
 
\begin{proof}
The proof follows the general lines of \cite{Salminen} developed in the previous proofs of Propositions \ref{prop0}
and \ref{prop1}.
Consider then the functions defined in \eqref{G-} and \eqref{G+}, with their respective derivatives in \eqref{dG-} and \eqref{dG+}.
As $\sigma_1^2\geq\sigma_2^2$, the derivative $G_-'(x)$ changes sign only once, from negative to positive.
Hence, equation \eqref{eqG} has only one root $c(r)>-1$, and the proof of (a) follows as claimed.
For (b), notice that the root $c(r)=0$ when $r=2\sigma_1^2$, the result follows from the monotonicity of the function $c(r)$.
This concludes the proof.
\end{proof}

\section{OSP for skew Brownian motion}

Consider a SBM  $(\widehat{X}_t)_{t\geq 0}$ with index $\beta\in(0,1)$
starting at $\widehat{x}\in\R$ (see \cite{ItoMcKean},  \cite{Walsh}, \cite{Lejay}).
This diffusion can be characterized by scale function
$$
\widehat{S}(x)=
\begin{cases}
x/(2(1-\beta)),& x< 0,\\
x/(2\beta),& x\geq 0,\\
\end{cases}
$$
and speed measure
$$
m(dx)=
\begin{cases}
4(1-\beta)dx,& x< 0,\\
4\beta dx,& x\geq 0.\\
\end{cases}
$$
It is known (see e.g. \cite{LejayPigato}) that $(\widehat{S}(\widehat{X}_t))_{t\geq 0}\overset{d}{=}(X_t)_{t\geq 0}$, i.e. the composition of
SBM with its scale function has the same law as OBM with $\sigma_1=1/(2(1-\beta))$ and $\sigma_2=1/(2\beta)$
and starting point $x=\widehat{S}(\widehat{x})$.
In other words, we can say that SBM in natural scale is OBM. 
We use this relationship to obtain conclusions about the OSP problem \eqref{osp}.
Given a payoff function $g\colon\R\to[0,\infty)$ we introduce the payoff function
\begin{equation}\label{ghat}
\widehat{g}(x)=(g\circ \widehat{S})(x).
\end{equation}
Due to the fact that $\widehat{S}$ is not differentiable at the origin, 
both functions $g$ and $\widehat{g}$ can not be differentiable at the origin.
The next result connects the optimal stopping problems for OBM and SBM.
\begin{proposition}
For $\beta\in(0,1)$ the optimal stopping problem \eqref{osp} for OBM $X$
with parameters $\sigma_1=1/(2(1-\beta))$, $\sigma_2=1/(2\beta)$ and continuous reward $g$
has value function $V(x)$ and stopping region $\Gamma$ if and only if
the optimal stopping problem for SBM $\widehat{X}$ with index $\beta\in(0,1)$
and reward $\widehat{g}$ in \eqref{ghat}
has stopping region $\widehat{\Gamma}=\widehat{S}^{-1}(\Gamma)$ 
and value function $\widehat{V}(y)=V(\widehat{S}(y))$.
\end{proposition}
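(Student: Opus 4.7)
The plan is to exploit the distributional identity $(\widehat{S}(\widehat{X}_t))_{t\geq 0}\overset{d}{=}(X_t)_{t\geq 0}$ to transfer the optimization from one diffusion to the other. First I would observe that $\widehat{S}\colon\R\to\R$ is a strictly increasing, continuous bijection (piecewise linear with positive slopes $1/(2(1-\beta))$ and $1/(2\beta)$), hence a homeomorphism. This implies that the natural filtration generated by $\widehat{X}$ coincides with the natural filtration generated by $X=\widehat{S}(\widehat{X})$, and so the set $\mathcal{M}$ of admissible stopping times is the same in both problems.

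Next I would translate the reward. By construction, for every $t\geq 0$,
\begin{equation*}
\widehat{g}(\widehat{X}_t)=g(\widehat{S}(\widehat{X}_t))=g(X_t).
\end{equation*}
Combined with the fact that the filtrations agree, this gives, for any $\tau\in\mathcal{M}$ and any starting point $\widehat{x}\in\R$ with $x:=\widehat{S}(\widehat{x})$,
\begin{equation*}
\E_{\widehat{x}}\bigl[\e^{-r\tau}\widehat{g}(\widehat{X}_\tau)\bigr]
=\E_{x}\bigl[\e^{-r\tau}g(X_\tau)\bigr],
\end{equation*}
where on $\{\tau=\infty\}$ the two $\limsup$ conventions match because $\widehat{S}$ is continuous and monotone. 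Taking the supremum over $\tau\in\mathcal{M}$ on each side yields the value function identity $\widehat{V}(\widehat{x})=V(\widehat{S}(\widehat{x}))$.

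Once the value functions are related, the stopping regions follow immediately from their definition \eqref{eq:set}: a point $y$ lies in $\widehat{\Gamma}$ iff $\widehat{V}(y)=\widehat{g}(y)$, which by the identity above is equivalent to $V(\widehat{S}(y))=g(\widehat{S}(y))$, i.e., $\widehat{S}(y)\in\Gamma$. Hence $\widehat{\Gamma}=\widehat{S}^{-1}(\Gamma)$, and the equivalence is symmetric in the two problems, giving the "if and only if" statement.

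The only delicate point is justifying that the filtrations truly coincide so that the class of admissible stopping times is shared. Since $\widehat{S}$ is a homeomorphism of $\R$, the $\sigma$-algebras $\sigma(\widehat{X}_s,s\leq t)$ and $\sigma(X_s,s\leq t)$ agree; after the usual augmentation this identity is preserved, so the step is harmless. No deeper obstacle arises, and in particular no smooth-fit or excessivity argument is needed for this proposition since it is purely a change-of-variable statement.
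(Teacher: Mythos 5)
Your argument is correct and follows essentially the same route as the paper: both use the identity $\widehat{g}(\widehat{X}_\tau)=g(\widehat{S}(\widehat{X}_\tau))$ together with the law identity $(\widehat{S}(\widehat{X}_t))_{t\ge 0}\overset{d}{=}(X_t)_{t\ge 0}$ to equate the two suprema, and then read off $\widehat{\Gamma}=\widehat{S}^{-1}(\Gamma)$ from the definition of the stopping region. Your explicit remark that $\widehat{S}$ being a homeomorphism makes the generated filtrations (and hence the admissible stopping times) coincide is a detail the paper passes over silently, but it is the right justification and does not change the substance of the proof.
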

\begin{proof} It holds
\begin{align*}
V(x)&=\sup_{\tau}\E_x\left(\textrm{e}^{-r\tau}g(X_{\tau})\right)=\sup_{\tau}\E_x\left(\textrm{e}^{-r\tau}\widehat{g}(\widehat{S}^{-1}(X_{\tau}))\right)\\
&=\sup_{\tau}\E_{\widehat{S}^{-1}(x)}\left(\textrm{e}^{-r\tau}\widehat{g}(\widehat{X}_{\tau})\right)=\widehat{V}(\widehat{S}^{-1}(x)),
\end{align*}
and this yields $\widehat{V}(y)=V(\widehat{S}(y))$.
Let
$
\widehat{\Gamma}=\{y\colon \widehat{g}(y)=\widehat{V}(y)\},
$
and consider
\begin{align*}
\widehat{S}(\widehat{\Gamma})
&=\{x\colon \exists y\in\widehat{\Gamma} \text{ such that } x=\widehat{S}(y)\}\\
&=\{x\colon \widehat{g}(\widehat{S}^{-1}(x))=\widehat{V}(\widehat{S}^{-1}(x))\}\\
&=\{x\colon g(x)=V(x)\}=\Gamma.
\end{align*}
This concludes the proof.
\end{proof}
From this proposition it follows that $0\notin\Gamma$ if and only if $0\notin\widehat{\Gamma}$,
because $\widehat{S}(0)=0$. Furthermore, 
$\Gamma$ is disconnected if and only if $\widehat{\Gamma}$ is disconnected,
as the function $\widehat{S}$ is strictly increasing and continuous.

\begin{example}
{\rm  Consider the problem \eqref{osp} for SBM with $\beta>1/2$ and reward $\widehat{g}(x)=(1+x)^+$ (cf. \cite{AlvarezSalminen}).
The corresponding OSP for OBM has $\sigma_1=1/(2(1-\beta))$, $\sigma_2=1/(2\beta)$,
and reward
\begin{equation}\label{gg}
g(x)=
\begin{cases}
(1+2(1-\beta)x)^+,& x<0,\\
1+2\beta x,&x\geq 0.
\end{cases}
\end{equation}
Notice that $g'(0-)=2(1-\beta)<2\beta=g'(0+)$ (see Figure \ref{3}).
As the scale function of OBM is $S(x)=x$, any $r$-excessive function $h$ satisfies (see p. 93 in \cite{Salminen})
\begin{equation}\label{ex}
h'(x-)\geq h'(x+),\text{ for all $x\in\R$}.
\end{equation} 
If $0\in\Gamma$ for some $r\geq 0$, then $V(0)=g(0)$, hence $V'(0-)<V'(0+)$ violating condition \eqref{ex}. 
We conclude that $0\notin\Gamma$ for any value of $r$, 
hence $0\notin\widehat{\Gamma}$ for any value of $r$ for the SBM problem.
This is a particular case of the result obtained in Proposition 1 in \cite{AlvarezSalminen}.
\begin{figure}[H]
\centering
\includegraphics[scale=0.25]{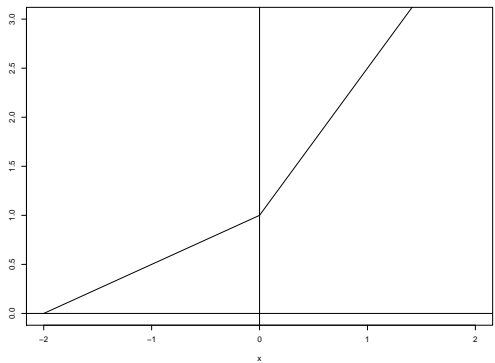}
\caption{The reward $g$ in \eqref{gg} for OBM, with $\sigma_1=2$ and $\sigma_2=2/3$ ($\beta=3/4$).}
\label{3}
\end{figure}
}
\end{example} 

{\bf Acknowledgement.} This research has been partially supported by grants from Magnus Ehrnrooths stiftelse, Finland.
The authors acknowledge also the hospitality of \AA bo Akademi University, Turku-\AA bo, Finland and Universidad de la Rep\'ublica,
Montevideo, Uruguay.

\end{document}